\documentclass[reqno,final,a4paper]{amsart}
\usepackage{color}
\usepackage[colorlinks=true,allcolors=blue]{hyperref}
\usepackage{mathrsfs}
\usepackage{mathtools}
\usepackage[linesnumbered,ruled,vlined]{algorithm2e}

\SetCommentSty{mycommfont}
\SetKwInOut{Input}{input}\SetKwInOut{Output}{output}

\usepackage[noadjust]{cite}
\usepackage[noabbrev,capitalize,nameinlink]{cleveref}
\crefname{equation}{}{}
\usepackage{fullpage}
\usepackage{graphics}
\usepackage{tikzscale}
\usepackage{pifont}
\usepackage{tikz}
\usetikzlibrary{fit, backgrounds, shapes, calc}
\usepackage{pgfplots}
\pgfplotsset{compat=1.12}
\usepackage{bbm}
\usepackage[T1]{fontenc}
\usetikzlibrary{arrows.meta}

\usepackage{environ}
\usepackage{framed}
\usepackage{url}
\usepackage{thm-restate}

\usepackage[noend]{algpseudocode}
\usepackage[labelfont=bf]{caption}
\usepackage{framed}
\usepackage[framemethod=tikz]{mdframed}
\usepackage{appendix}
\usepackage{graphicx}
\usepackage[textsize=tiny]{todonotes}
\usepackage{tcolorbox}
\usepackage{xcolor}
\usepackage[shortlabels]{enumitem}
\crefformat{enumi}{#2#1#3}
\crefrangeformat{enumi}{#3#1#4 to~#5#2#6}
\crefmultiformat{enumi}{#2#1#3}
{ and~#2#1#3}{, #2#1#3}{ and~#2#1#3}
\allowdisplaybreaks

\usepackage{hyperref}
\usepackage{nameref}

\makeatletter
\newcommand{\namedlabel}[2]{%
    \phantomsection
    \def\@currentlabelname{#2}%
    \label{#1}%
}
\makeatother

\usepackage{ifthen}
\numberwithin{equation}{section}

\newtheorem{theorem}{Theorem}[section]

\newtheorem{lemma}[theorem]{Lemma}

\crefname{claim}{Claim}{Claims}

\newtheorem{conjecture}[theorem]{Conjecture}
\newtheorem*{question*}{Question}

\theoremstyle{definition}

\newtheorem*{definition*}{Definition}

\newtheorem*{fact*}{Fact}

\crefname{fact}{Fact}{Facts}

\theoremstyle{remark}

\DeclareMathOperator{\Bin}{Bin}

\renewcommand{\Pr}{\mathbb P}

\newlength{\entropynamewidth}

\title{Quadratic-size Random Permutations are Universal}
\author{Yu Chen}
\address{(Y. Chen) School~of~Mathematics~and~Statistics,~Beijing~Institute~of~Technology,~Beijing,~102488,~China.}\email{yu.chen2023@bit.edu.cn}

\author{Yiting Wang}
\address{(Y. Wang) Institute of Science and Technology Austria,~Klosterneuburg,~3400,~Austria.}
\email{yiting.wang@ist.ac.at}

\thanks{The second author is supported by the European Research Council (ERC), via grant agreements ``RANDSTRUCT'' No.\ 101076777.}
\date{\today}

\begin{document}

\begin{abstract}
A permutation is $n$-universal if it contains every permutation of length $n$ as a pattern. We prove that a uniformly random permutation of length $32n^2$ is $n$-universal with probability at least $1-\exp(-c n/\log n)$, for an absolute constant $c>0$ and all sufficiently large $n$. This removes a $\log\log n$ factor from an earlier result of He and Kwan~\cite{He-Kwan} and resolves an open problem of Alon~\cite{Alon2016}.
The proof was found by GPT-6 Pro.
\end{abstract}

\maketitle

\section*{Declaration on the usage of generative AI}
The proof was found by GPT-6 Pro.
The authors do not claim credit for the proof ideas.
They have verified and digested the proof and take full responsibility for the correctness of the manuscript. 

\section{Introduction}
Universality asks how small or sparse a combinatorial object can be while
containing every member of a prescribed family. Classical graph examples
include Rado's universal graphs~\cite{Rado1964} and induced universal
graphs for all $n$-vertex graphs~\cite{Moon1965,Alon2017}, bounded-degree
graphs~\cite{AlonNenadov2019}, and planar graphs~\cite{EsperetJoretMorin2023}.
For ordinary subgraph containment, sparse universal graphs have been
constructed for families with bounded maximum degree~\cite{AlonCapalbo2008},
density~\cite{AlonEtAlDensity}, or degeneracy~\cite{AllenBottcherLiebenau2026}.

Random graphs also provide universal hosts, notably for bounded-degree
spanning graphs~\cite{DellamonicaEtAl2015,FerberNenadov2018,FerberKronenbergLuh2019}
and trees~\cite{Montgomery2019,CohenAntonirEtAl2026}. For example,
Montgomery~\cite{Montgomery2019} proved that $G(n,C_\Delta\log n/n)$
contains every $n$-vertex tree of maximum degree at most $\Delta$ with
high probability, at the optimal order of edge density. Here $G(n,p)$
is the Erd\H{o}s-R\'enyi random graph with edge probability $p$, and \emph{with high probability}
means with probability tending to one.
The \emph{randomly perturbed} model adds sparse random edges to a graph
of linear minimum degree~\cite{BohmanFriezeMartin2003}; results include
prescribed spanning embeddings~\cite{KrivelevichKwanSudakov2017,BottcherEtAl2020Embedding}
and universality for bounded-degree spanning trees~\cite{BottcherEtAl2019Universality}
and graphs of maximum degree two~\cite{Parczyk2020,BottcherEtAl2024Square}.

We study the corresponding questions for permutations, where containment
must respect two linear orders. Write $[n]=\{1,\ldots,n\}$ and let
$\mathfrak S_n$ denote the set of permutations of $[n]$. A permutation
$\sigma\in\mathfrak S_N$ \emph{contains} $\pi\in\mathfrak S_n$ as a
\emph{pattern} if there are indices $1\le x_1<\cdots<x_n\le N$ such that
\[
  \sigma(x_i)<\sigma(x_j)
  \quad\Longleftrightarrow\quad
  \pi(i)<\pi(j)\qquad(i,j\in[n]).
\]
The indices need not be consecutive. We call $\sigma$ \emph{$n$-universal},
or an \emph{$n$-superpattern}, if it contains every member of
$\mathfrak S_n$. As in the graph setting, one may ask for the shortest
universal permutation or for the length at which a uniformly random
permutation becomes universal with high probability.

Universality is closely related to pattern avoidance. The Stanley--Wilf
conjecture asserts that, for each fixed pattern $\pi$, the number
$a_N(\pi)$ of permutations in $\mathfrak S_N$ avoiding $\pi$ is at most
$C_\pi^N$. Alon and Friedgut~\cite{AlonFriedgut2000} had earlier obtained
an almost exponential upper bound. Marcus and Tardos~\cite{MarcusTardos2004} proved it by
establishing the F\"uredi--Hajnal conjecture~\cite{FurediHajnal1992}:
an $m\times m$ zero--one matrix avoiding a fixed permutation matrix has
only $O_\pi(m)$ ones. The implication between the conjectures is due to
Klazar~\cite{Klazar2000}; Cibulka~\cite{Cibulka2009} later sharpened the
quantitative relation between their constants.
At most $\sum_{\pi\in\mathfrak S_n}a_N(\pi)$
permutations of length $N$ fail to be $n$-universal, so sufficiently
strong uniform avoidance estimates would imply random universality.
However, Fox~\cite{Fox} showed that the optimal exponential constants
can grow superpolynomially with the pattern length; these fixed-pattern
bounds therefore do not yield universality in the quadratic regime
considered here.

In the deterministic setting, let $u(n)$ be the minimum length of an
$n$-universal permutation. The counting bound
$\binom{u(n)}n\ge n!$ gives $u(n)\ge(e^{-2}+o(1))n^2$; see
Arratia~\cite{Arratia1999}. Miller~\cite{Miller2009} constructed universal
permutations of length $n(n+1)/2$, and Engen and Vatter~\cite{EngenVatter2021}
improved this to $\lceil(n^2+1)/2\rceil$. Chroman, Kwan and
Singhal~\cite{ChromanKwanSinghal2021} improved the lower bound to
$u(n)\ge(1.000076/e^2)n^2$ for all sufficiently large $n$. Thus the
deterministic problem has quadratic order, while its leading constant
remains undetermined.

For the random problem, let $\sigma_N$ be uniformly distributed in
$\mathfrak S_N$. The classical results of Logan and
Shepp~\cite{LoganShepp1977} and Vershik and Kerov~\cite{VershikKerov1977}
show that its longest increasing subsequence has length
$(2+o(1))\sqrt N$ with high probability; see also Romik's
book~\cite{Romik2015}. Consequently, for every fixed
$\varepsilon\in(0,1)$, a random permutation of length
$\lfloor(1-\varepsilon)n^2/4\rfloor$ is not $n$-universal with high
probability. Alon conjectured that this obstruction determines the
threshold: for every fixed $\varepsilon>0$, a random permutation of length
$\lceil(1+\varepsilon)n^2/4\rceil$ is $n$-universal with high probability;
see~\cite{Arratia1999,Alon2016}. He also observed that $O(n^2\log n)$
suffices and posed the weaker problem of proving that $1000n^2$ suffices
\cite{Alon2016}.
This problem is also listed on Xiaoyu He's homepage~\cite{HeProblems}.

He and Kwan~\cite[Theorem~1.2]{He-Kwan} proved that a random permutation
of length $\lceil2000n^2\log\log n\rceil$ is $n$-universal with high
probability. They also showed that a random permutation of length $20n^2$
typically contains every member of a fixed family
$\mathcal Q_n\subseteq\mathfrak S_n$ of size $(1-o(1))n!$
\cite[Theorem~1.3]{He-Kwan}. We establish the quadratic bound for full
universality, with a quantitative estimate on the failure probability.

\begin{theorem}\label{thm:main}
   There are absolute constants $c>0$ and $n_0$ such that, for every integer
$n\ge n_0$, a uniformly random permutation $\sigma\in\mathfrak{S}_{32n^2}$
satisfies
\[
  \Pr\bigl(\sigma\text{ is not }n\text{-universal}\bigr)
  \le \exp\!\left(-c\,\frac n{\log n}\right).
\]
\end{theorem}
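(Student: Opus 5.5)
We model the random permutation of length $N=32n^2$ as $N$ i.i.d.\ uniform points in $[0,1]^2$. Embedding $\pi\in\mathfrak S_n$ then means finding points $p_1,\dots,p_n$ whose $x$-coordinates increase and whose $y$-coordinates have the relative order of $\pi$. We plan a \emph{greedy sequential embedding} with a \emph{two-sided frontier}. We process $i=1,\dots,n$ in order. At each step we keep the current $x$-threshold $X_i$ and, for the value $\pi(i)$, the $y$-interval $(L_i,U_i)$ allowed by the already-embedded neighbours in value order. We choose $p_i$ as the point with smallest $x$-coordinate beyond $X_i$ whose $y$-coordinate lies in $(L_i,U_i)$, possibly restricted to a central subinterval of relative width $1/2$ so the gaps do not degenerate. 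The expected $x$-advance at step $i$ is about $1/(N\,|U_i-L_i|)$. Summing, success requires $\sum_i 1/|U_i-L_i|\lesssim N$. The issue is that the value-gaps shrink as the value axis fills up, and for adversarial $\pi$ the harmonic-type sum of inverse gap lengths can be as large as $n^2\log n$. This is exactly where Alon's $O(n^2\log n)$ bound comes from, and He--Kwan's $\log\log n$ is a partial fix.

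To remove the log we will not embed values naively. Instead we split the $y$-axis into $n$ equal-ish ``slots'' in advance, with a flexible assignment. First, partition $[0,1]$ in $y$ into $m=n$ intervals $I_1,\dots,I_n$ of length $1/n$, and $[0,1]$ in $x$ into $n$ columns of width $1/n$. The expected number of points per cell is then $32$. A deterministic choice (value $\pi(i)$ into row $\pi(i)$, column $i$) fails because a fixed cell is empty with constant probability. Second, view the task as finding an increasing-in-both-orders transversal in the random $n\times n$ grid of cell counts. We allow the point for index $i$ to drift: the $x$-window for $i$ is a range of columns of total width $O(1)$ in expectation, consumed greedily, and the value $\pi(i)$ may use any $y$-position inside row $\pi(i)$ consistent with the neighbours embedded so far. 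Because every row has width exactly $1/n$, and within a row we only need relative order among entries placed in that row (just one), the gap lengths stay $\asymp 1/n$. Each step then costs $O(1/N\cdot n)=O(1/n)$ of $x$-length in expectation, with exponential tails, and the total is $\le 1$ with overwhelming probability. The $y$-constraints inside a row are trivial since each row holds one value. The remaining problem is that the greedy scan in $x$ must find a point in a prescribed row, and rows are hit with rate $N/n\cdot(\text{width})$. The waiting times are i.i.d.\ exponential-like with mean $n/N=1/(32n)$, so the sum of $n$ of them concentrates around $1/32$. The catch is that points are used only once and the scans are not independent across different $\pi$.

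Third comes the union bound over $n!$ patterns, which is the main obstacle. The per-pattern failure probability from the above is $e^{-\Theta(n)}$, which cannot beat $n!=e^{n\log n}$. So we will instead prove a \emph{structural} property of $\sigma$ that holds with probability $1-e^{-cn/\log n}$ and deterministically implies the greedy succeeds for every $\pi$. Concretely, we show that for every family of $x$-intervals $J_1<\dots<J_k$ and row sets, the number of points in each axis-parallel rectangle of the form (union of $\ge\log n$ consecutive columns)$\times$(row) is at least a constant fraction of its mean. Such rectangles number only $\mathrm{poly}(n)$, and each has mean $\gtrsim 32\log n$, so Chernoff gives failure probability $n^{-C}$ each. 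This is not yet $e^{-cn/\log n}$. To get the stated bound we block: group the indices into $n/\log n$ blocks of length $\log n$. We require the property only at block scale, and we show that the events for disjoint column-blocks are independent, so the probability that more than a small fraction of blocks are bad is $\exp(-c\,n/\log n)$. Bad blocks are absorbed by the slack between $32n^2$ and the needed $\approx n^2$, skipping them at a cost of $O(1/n)$ each in $x$-length. The hardest part will be making the greedy-with-skipping argument pattern-independent. For this we intend to charge each step's advance to a deterministic quantity, namely the column position at which row $\pi(i)$ next has a point, with block-level good counts. This gives a bound uniform over $\pi$.
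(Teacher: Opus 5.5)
Your proposal has a genuine gap at its last step, and it cannot be repaired inside the scheme you describe. Your embedding fixes the row of every value: $\pi(i)$ always goes to the $y$-row $\pi(i)$ of height $1/n$. For a fixed row assignment the leftmost-greedy scan is optimal. So no \emph{structural property} of $\sigma$ can help if the scheme itself is infeasible for some $\pi$. It is infeasible, because $\pi$ is chosen after $\sigma$.

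Let the adversary build $\pi$ by running your greedy and, at each step, choosing among the unused rows the one whose first point to the right of the current frontier lies farthest. After Poissonization the rows are independent Poisson processes of rate $32n$ in $x$. By the strong Markov property, the region to the right of the new frontier is unexplored in every unused row. So when $k$ rows remain, the step costs the maximum of $k$ fresh exponentials of mean $1/(32n)$, which is $H_k/(32n)$ in expectation ($H_k$ the $k$-th harmonic number). The total advance concentrates around $\sum_{k\le n}H_k/(32n)\approx(\ln n)/32$, which exceeds $1$ for large $n$. Hence, with high probability, some $\pi$ has no embedding of your form at all.

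The quantity you plan to charge, the position of the next point in row $\pi(i)$, is exactly what this adversary maximises. Your rectangle-count property only bounds each wait by $O(\log n/n)$, giving total advance $O(\log n)$, which is Alon's $n^2\log n$ again. Skipping bad blocks is irrelevant, since the obstruction occurs even when every block is good. Your estimate that only about $n^2$ is needed holds for a fixed $\pi$, not for the worst one.

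The missing idea is to give every pattern many alternative placements, so that failures are charged to $\pi$-independent objects with a cheap union bound. The paper uses the He--Kwan coupling to get a $2n\times 8n$ matrix with independent Bernoulli$(1/2)$ entries. This leaves room for $n$ vertical shifts: value $v$ goes to row $t+v$, $t\in[n]$. The paper then:
\begin{enumerate}
\item splits $\pi$ into $r=\lceil n/b\rceil$ tuples of length $b=\lceil 48\log n\rceil$;
\item for each shift, scans $2r$ column strips of width $3b$, retrying an unfinished tuple in the next strip;
\item notes that a test (tuple, shift, strip) is bad with probability at most $e^{-b/12}$;
\item uses the event that each of the at most $bn^b$ tuples has fewer than $n$ bad tests, which suffices deterministically: if all $n$ shifts failed there would be at least $n(r+1)$ failures, but the $r$ tuples of $\pi$ supply fewer than $rn$ bad tests;
\item bounds the complement by $e^{-\Omega(n/\log n)}$, since any $n$ bad tests of one tuple contain $\lceil n/b^2\rceil$ with disjoint supports, and this beats the $e^{O(\log^2 n)}$ tuples.
\end{enumerate}
Your blocking into index blocks of length $\log n$ is at the right scale. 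However, your block events concern point counts rather than embeddability of specific tuples. More importantly, without shifts the analogous counting argument with $O(r)$ strips would require every tuple to have only $O(1)$ bad strips. That threshold cannot survive a union bound over $n^{\Theta(\log n)}$ tuples when single tests fail with probability $n^{-O(1)}$. The $n$ shifts raise the threshold to $n$, which is what lets the large-deviation bound win.
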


\Cref{thm:main} removes the $\log\log n$ factor from the bound of He and
Kwan and resolves Alon's weaker $1000n^2$ problem. Together with the
increasing-subsequence obstruction, it determines the order of magnitude
of the length required for random universality. The constant $32$ is
chosen for convenience; the sharp constant $1/4$ predicted by Alon's
conjecture remains open. In~\Cref{sec:concluding-remarks}, we sketch how to reduce the constant $32$ to any value greater than $1$ and explain why $1$ is a natural barrier for this approach. 
Regarding the non-universal probability, the bound $\exp(-\Omega(n/\log n))$ comes from the limitation of the approach and is likely not sharp. We discuss that also in~\Cref{sec:concluding-remarks}.

Our proof uses the interval-minor coupling of He and Kwan to pass to an
independent zero--one matrix. We split each target permutation into tuples
of length $O(\log n)$ and search for them in successive column strips, restarting an
unfinished tuple in the next strip. Bounding the overlap between the
supports of these tests gives uniform control over all short tuples.
A deterministic counting argument then yields every target permutation,
without a union bound over all $n!$ patterns.

\section{Interval minors}
Let $P_\pi$ be the permutation matrix
whose ones are in positions $(\pi(i),i)$. Following Fox~\cite{Fox}, an
\emph{interval contraction} replaces two consecutive rows or columns of a
zero--one matrix by their entrywise Boolean OR. A matrix $B$ is an
\emph{interval minor} of $A$ if it can be obtained from $A$ by interval
contractions and changing ones to zeros. This relation is transitive, and
$\sigma$ contains $\pi$ as a pattern if and only if $P_\pi$ is an interval
minor of $P_\sigma$; see~\cite{He-Kwan}. We call $A$
\emph{$n$-universal} if every $P_\pi$, $\pi\in\mathfrak S_n$, is an
interval minor of $A$.

\begin{lemma}[{\cite[Lemma~2.2]{He-Kwan}}]\label{lem:coupling}
Let $N$ be divisible by $4n$. A uniform random permutation
$\sigma\in\mathfrak S_N$ and a $2n\times N/(4n)$ matrix $A$ with
independent Bernoulli$(1/2)$ entries can be coupled so that $A$ is always
an interval minor of $P_\sigma$.
\end{lemma}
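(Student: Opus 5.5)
The plan is to take $A$ to be a blocked contraction of $P_\sigma$ and then compare its entries with independent fair coins. Cut the $N$ columns of $P_\sigma$ (positions) into $N/(4n)$ consecutive blocks of exactly $4n$ columns. Cut the $N$ rows (values) into $2n$ consecutive bands of exactly $N/(2n)$ rows. Contracting every block and every band gives a $2n\times N/(4n)$ zero--one matrix $B$. Its entry $B_{r,c}$ equals $1$ exactly when some position in column block $c$ carries a value in band $r$.

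Since every block and band is a nonempty interval, $B$ is always an interval minor of $P_\sigma$. By transitivity it then suffices to couple $B$ with the Bernoulli matrix $A$ so that $A\le B$ entrywise, because the surplus ones of $B$ may be changed to zeros. By Strassen's theorem, such a monotone coupling exists as soon as $B$ stochastically dominates the product measure $\mathrm{Ber}(1/2)^{\otimes 2n\times N/(4n)}$. I would construct it explicitly by revealing the entries one at a time. The goal is to show that, conditional on any history, the next entry of $B$ equals $1$ with probability at least $1/2$; then $A$ can be generated with each coin dominated by the corresponding entry of $B$.

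To get tractable conditional probabilities, I would realise $\sigma$ from $N$ i.i.d.\ uniform points in $[0,1]^2$. Sort the points by their $x$-coordinate. Given this order, the $y$-coordinates attached to successive positions are i.i.d.\ uniform, so the column blocks receive independent families of $4n$ i.i.d.\ values. Heuristically, each band occupies a $1/(2n)$ fraction of the values. A block of $4n$ independent values therefore misses a fixed band with probability about $(1-1/(2n))^{4n}\le e^{-2}<1/2$. Within one block, the indicators that each band is hit are occupancy indicators of $4n$ balls thrown into $2n$ bins. These are negatively dependent in the right direction: learning that some bins are empty only pushes balls into the remaining bins. I would exploit this through a ball-by-ball exposure, showing that the conditional probability of occupying the next bin, given the occupancy status of the earlier bins, stays above $1-e^{-2}>1/2$.

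The main obstacle is the mismatch between the two cuttings. The $x$-cut by rank is harmless, but a $y$-cut at fixed thresholds $r/(2n)$ can leave a band empty with small positive probability, and then $B$ is not a genuine contraction. Conversely, cutting by value rank into bands of exactly $N/(2n)$ makes the bands data-dependent, and the column blocks no longer independent, because values are drawn without replacement and bands get depleted. My first attempt would use the rank-based bands and argue by the sequential exposure over column blocks. Given everything revealed so far, the $4n$ values of the next block form a uniform subset of the unused values. I would need to show that depletion cannot push any band's hitting probability below $1/2$ uniformly in the history, perhaps by the symmetric choice of processing columns and rows simultaneously. If this fails, I would fall back on the fixed-threshold $y$-cut and deal with the rare empty-band event by a separate monotone modification. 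The factor $2$ of slack in $4n$ versus $2n$ is exactly what makes $e^{-2}<1/2$ available for this.
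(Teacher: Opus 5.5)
The paper does not prove this lemma; it quotes it from He and Kwan, so your proposal has to stand on its own. It has a genuine gap. The one substantive step is never proved: that some interval minor of $P_\sigma$ of shape $2n\times N/(4n)$ stochastically dominates the Bernoulli$(1/2)$ product measure.

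\textbf{Your first route fails.} With rank-based bands, the bound $\Pr(B_{i,j}=1\mid\text{history})\ge\frac12$ is false for every exposure order. Take $N=32n^2$, so each band has $16n=4\cdot 4n$ values. Suppose the other revealed entries show that four blocks other than $j$ meet only band $i$. Those blocks then contain all of band $i$, so $B_{i,j}=0$ with conditional probability one. Such a history has positive probability, and it can precede whichever entry is exposed last.

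\textbf{Your within-block claim is also false.} Negative dependence works against you when you condition on other bins being \emph{occupied}. Given that the other $2n-1$ bins are occupied, the last bin is occupied with probability tending to $1-e^{-\rho}\approx0.80$, where $\rho\approx1.59$ solves $\rho=2(1-e^{-\rho})$. Already for $n=3$ this probability is about $0.852<1-e^{-2}$. What does hold, for any prescribed statuses of the other bins, is
\[
\Pr(\text{bin }a\text{ empty}\mid\cdot)\le\tfrac{2n-1}{4n}.
\]
To see this, move one ball from a multiply occupied bin into bin $a$. There are at least $4n-(2n-1)=2n+1$ such moves, and each resulting configuration has at most $2n-1$ preimages.

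\textbf{Your fallback stops exactly where an idea is needed.} You cannot discard the empty-band event, because $A$ must be exactly i.i.d.\ and \emph{always} an interval minor. A zero row generally cannot be realised by a nonempty value interval without destroying a one in a neighbouring band. Deleting zero rows would suffice for the paper's application, as in its concluding remarks, but not for the lemma as stated.

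The missing observation is that band sizes are independent of $\sigma$. Give each position a letter in $[2n]$, assign values in increasing order of letter, and break ties uniformly. For i.i.d.\ letters, $\sigma$ is uniform conditional on the vector of letter counts. So conditioning on any event determined by the counts keeps $\sigma$ uniform. In particular, you may take the letters i.i.d.\ uniform conditioned on every letter occurring. Every band is then a nonempty value interval, so $B$ is always an interval minor of $P_\sigma$.

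Now expose block by block. Each block's conditional law is its i.i.d.\ law, restricted to the revealed statuses and reweighted by the number of ways later blocks can supply the letters still missing. This weight is nondecreasing in the block's occupied set. Hence the switching bound survives, giving $\Pr(B_{a,j}=0\mid\text{history})\le\frac{2n-1}{4n}<\frac12$ for every history. The sequential coupling then gives $A\le B$.
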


\section{Proof of \texorpdfstring{\Cref{thm:main}}{the main theorem}}

\begin{proof}[Proof of \Cref{thm:main}]
Let $n$ be sufficiently large, and put
\[
  b=\lceil48\log n\rceil,\qquad r=\lceil n/b\rceil.
\]
By \Cref{lem:coupling} with $N=32n^2$ and transitivity of interval minors,
it suffices to prove that a $2n\times8n$ matrix $A$ with independent
Bernoulli$(1/2)$ entries is $n$-universal with the required probability.
Since $6br<6(n+b)\le8n$ for large $n$, we may use the disjoint column
intervals
\[
  J_j=\{3b(j-1)+1,\ldots,3bj\},\qquad j\in[2r].
\]

Let $\mathcal W$ consist of all ordered tuples
$w=(v_1,\ldots,v_\ell)$ of distinct elements of $[n]$, with
$1\le\ell\le b$. For each $w$, let $B_w\subseteq[n]\times[2r]$ be the
set of pairs $(t,j)$ for which there are no columns
$x_1<\cdots<x_\ell$ in $J_j$ satisfying
\begin{equation}\label{eq:tuple-embedding}
  A(t+v_i,x_i)=1\qquad(i\in[\ell]).
\end{equation}
Call such pairs bad tests, and define
\[
  \mathcal E=\{|B_w|<n\text{ for every }w\in\mathcal W\}.
\]

We first show deterministically that $\mathcal E$ implies universality.
Fix $\pi\in\mathfrak S_n$, and split its sequence of values into $r$
consecutive tuples $w_1,\ldots,w_r$, each of length $b$ except possibly
the last. These retain their original labels in $[n]$. For each shift
$t\in[n]$, scan the strips $J_1,\ldots,J_{2r}$ in order, attempting the
next unfinished tuple in each strip. On success, retain an embedding as in
\eqref{eq:tuple-embedding} and advance to the next tuple; on failure,
discard any partial embedding and retry the same tuple in the next strip.
A completed search gives columns $x_1<\cdots<x_n$ with
$A(t+\pi(i),x_i)=1$ for all $i$, and hence an interval minor $P_\pi$.

If all $n$ searches fail, each completes at most $r-1$ tuples and thus
encounters at least $r+1$ bad tests. There are at least $n(r+1)$ failures
in total. For each $i$, the failures while searching for $w_i$ correspond
to distinct pairs in $B_{w_i}$, since each shift visits each strip only
once. On $\mathcal E$ their total number is therefore at most
$\sum_{i=1}^r|B_{w_i}|<rn$, a contradiction. Thus $A$ is $n$-universal
on $\mathcal E$.

We now bound $\Pr(\mathcal E^c)$. Fix
$w=(v_1,\ldots,v_\ell)\in\mathcal W$ and a test $(t,j)$. The greedy
scan for \eqref{eq:tuple-embedding} queries the next required row in each
successive column, advancing whenever it sees a one; after completing the
tuple, it queries a fixed row in the remaining columns. Every query is a
fresh fair bit, since its column has not been inspected before. Thus the
failure probability is $\Pr(\Bin(3b,1/2)<\ell)$, and the Chernoff bound
\cite[Theorem~2.1]{JLR2000} gives
\begin{equation}\label{eq:single-test}
  \Pr((t,j)\in B_w)
  \le \Pr(\Bin(3b,1/2)<b)\le e^{-b/12}.
\end{equation}

Put $S=\{v_1,\ldots,v_\ell\}$. The bad-test event for $(t,j)$ depends
only on the entries in the deterministic set $(S+t)\times J_j$.
Join two tests when these sets intersect. Intersecting tests must lie in
the same strip and their shifts differ by an element of $S-S$, so this
graph has maximum degree at most
\[
  |S-S|-1\le\ell(\ell-1)\le b^2-1.
\]
Put $k=\lceil n/b^2\rceil$. The standard greedy bound for independent
sets implies that any $n$ tests contain at least $k$ tests with pairwise
disjoint supports. For any fixed set of $k$ such tests, their badness
events are mutually independent, so \eqref{eq:single-test} bounds the
probability that all are bad by $e^{-bk/12}$. Taking a union bound over
the possible sets of $k$ such tests yields
\[
  \Pr(|B_w|\ge n)\le(2nr)^k e^{-bk/12}.
\]
Finally, $|\mathcal W|\le\sum_{\ell=1}^b n^\ell\le bn^b$. Since
$2nr\le n^2$ for large $n$, we have
$\log(2nr)\le2\log n\le b/24$. Another union bound therefore gives
\begin{align*}
  \Pr(A\text{ is not }n\text{-universal})
  &\le\Pr(\mathcal E^c)
   \le bn^b(2nr)^k e^{-bk/12}\\
  &\le\exp\!\left(\log b+b\log n-\frac{bk}{24}\right)\\
  &\le\exp\!\left(O((\log n)^2)-\frac n{24b}\right)
   \le\exp\!\left(-c\,\frac n{\log n}\right)
\end{align*}
for an absolute $c>0$ and all sufficiently large $n$, as required.
\end{proof}

\section{Concluding remarks}\label{sec:concluding-remarks}
\subsection{Reducing the length of the random permutation}
We established that a random permutation of length $32n^2$ is $n$-universal with high probability.
The constant $32$ can be reduced to any constant greater than $1$
within the interval-minor framework. Here is a brief sketch of how to obtain this result.

Fix $\varepsilon>0$ and choose
$\delta\in(0,1)$ with $(1+\delta)^3<1+\varepsilon$. Choose a sufficiently
large constant $C_\delta$, set $b=\lceil C_\delta\log n\rceil$ and
$r=\lceil n/b\rceil$, and put
\[
  R=n+\lceil\delta n\rceil,\qquad
  T=\lceil(1+\delta)r\rceil,\qquad L=\lceil(1+\delta)Rb\rceil.
\]
Take $LT$ independent uniform variables $U_i\in[0,1]$, partition
$[0,1]$ into $R$ equal intervals $I_a$, and set
$A(a,i)=\mathbf1_{\{U_i\in I_a\}}$. After deleting empty rows, $A$
is an interval minor of the induced permutation matrix. Partition its
columns into $T$ strips of length $L$, and use the same tuple tests with
$s=R-n$ shifts in the original band labels.

The entries of $A$ are negatively associated: each column is a
multinomial vector with one trial, and independent unions preserve
negative association~\cite{JoagDevProschan1983}. Since bad-test events
are decreasing, tests with disjoint supports satisfy the same product
upper bound as independent tests. Moreover, a greedy test fails with
probability at most
$\Pr(\Bin(L,1/R)<b)\le e^{-a_\delta b}$ by the Chernoff bound, where
$a_\delta=\delta^2/(2(1+\delta))$. If a tuple has at least $\delta s$
bad tests, the same support-graph argument extracts
$k_\delta=\lceil\delta s/b^2\rceil$ of them with disjoint supports.
The union bounds over these sets and all tuples therefore give
\[
  \Pr(\exists w:\ |B_w|\ge\delta s)
  \le bn^b(sT)^{k_\delta}e^{-a_\delta b k_\delta}
  \le e^{-\Omega_\delta(n/\log n)},
\]
provided $C_\delta$ is sufficiently large. Indeed,
$\log(sT)\le2\log n+O_\delta(1)$, while
$bk_\delta\ge\delta s/b=\Omega_\delta(n/\log n)$ and
$\log(bn^b)=O_\delta((\log n)^2)$.
Outside this exceptional event, every tuple has fewer than $\delta s$
bad tests. For any target, if all $s$ searches
failed, they would encounter at least $s(T-r+1)>\delta sr$ bad tests,
whereas its $r$ tuples account for fewer than $\delta sr$ bad tests.
Thus every target is found.

Finally, $LT=(1+\delta)^3n^2+O_\delta(n\log n)<N=\lceil(1+\varepsilon)n^2\rceil$
for large $n$. The pattern induced by the first $LT$ entries of a uniform
permutation of length $N$ is itself uniform in $\mathfrak S_{LT}$.
Hence a uniform permutation of length $N$ is $n$-universal with failure
probability $e^{-\Omega_\varepsilon(n/\log n)}$.

Nevertheless, the leading constant $1$ is a natural barrier for this approach because
each search prescribes one of $R\ge n$ bands for every target entry.
Even without strip boundaries or restarts, a fixed-shift greedy search
finishes with probability $\Pr(\Bin(N,1/R)\ge n)$. If
$N\le(1-\eta)n^2$ for fixed $\eta\in(0,1)$, this is
$e^{-\Omega_\eta(n)}$ by the Chernoff bound. A union bound over the
$O(n)$ available shifts therefore shows that these searches typically
miss even a fixed target. Thus, improving the leading constant below $1$
requires new ideas.

\subsection{Sharper failure probability}
Sharper failure estimates are suggested by the following conjecture of
He and Kwan.
\begin{conjecture}[{\cite[Conjecture~5.1]{He-Kwan}}]\label{conj:containment}
Let $\sigma$ be a uniform random permutation of length $1000n^2$.
Then, uniformly over $\pi\in\mathfrak S_n$,
\[
  \Pr(\sigma\text{ does not contain }\pi)
  \le \exp(-n^{3/2+o(1)}).
\]
\end{conjecture}
By a union bound, this would also give failure probability
$\exp(-n^{3/2+o(1)})$ for $n$-universality at length $1000n^2$. If true, the exponent $3/2$ is sharp, as shown by Fox~\cite{Fox}.

Our current estimates do not reach Conjecture~\ref{conj:containment}. For a general
tuple length $b$, the overlap bound retains only $\lceil n/b^2\rceil$
disjoint tests, giving a probability factor $e^{-\Theta(n/b)}$ when
$b=o(\sqrt n)$. Counting their possible locations costs
$\exp(O((n/b^2)\log n))$, and counting tuples costs
$\exp(O(b\log n))$. Optimizing these bounds takes $b$ of order $\log n$
and yields an exponent of order $n/\log n$. 

Moreover, the sufficient event $\mathcal E$ itself cannot yield a
super-linear failure exponent. If the rows $b+1,2b+1,\ldots,rb+1$ are
zero throughout $J_1$, every shift of $w=(1,\ldots,b)$ fails in that
strip: each interval $t+[b]$, $t\in[n]$, contains one of these rows.
This forces $|B_w|\ge n$ and has probability $2^{-3br}$. Hence
\[
  \Pr(\mathcal E^c)\ge 2^{-3br}\ge 2^{-6n},
\]
where $br<n+b\le2n$. Thus even a sharper estimate of
$\Pr(\mathcal E^c)$ cannot give the exponent $n^{3/2+o(1)}$ in Conjecture~\ref{conj:containment}. This local obstruction need not prevent containment; reaching that scale requires a different sufficient event
or more direct control of containment failure.

\bibliographystyle{plain}
\bibliography{ref}

@article{He-Kwan,
  title={Universality of random permutations},
  author={He, Xiaoyu and Kwan, Matthew},
  journal={Bulletin of the London Mathematical Society},
  volume={52},
  number={3},
  pages={515--529},
  year={2020},
  publisher={Wiley Online Library},
  doi={10.1112/blms.12345}
}

@misc{HeProblems,
  author={He, Xiaoyu},
  title={Problems},
  howpublished={\url{https://alkjash.github.io/problems/}},
  note={Open problem 2. Accessed 9 October 2026}
}

@article{Fox,
  title={{Stanley--Wilf} limits are typically exponential},
  author={Fox, Jacob},
  journal={arXiv preprint arXiv:1310.8378},
  year={2013},
  url={https://arxiv.org/abs/1310.8378}
}

@article{MarcusTardos2004,
  author={Marcus, Adam and Tardos, G{\'a}bor},
  title={Excluded permutation matrices and the {Stanley--Wilf} conjecture},
  journal={Journal of Combinatorial Theory, Series A},
  volume={107},
  number={1},
  pages={153--160},
  year={2004},
  doi={10.1016/j.jcta.2004.04.002}
}

@article{FurediHajnal1992,
  author={F{\"u}redi, Zolt{\'a}n and Hajnal, P{\'e}ter},
  title={{Davenport--Schinzel} theory of matrices},
  journal={Discrete Mathematics},
  volume={103},
  number={3},
  pages={233--251},
  year={1992},
  doi={10.1016/0012-365X(92)90316-8}
}

@article{AlonFriedgut2000,
  author={Alon, Noga and Friedgut, Ehud},
  title={On the number of permutations avoiding a given pattern},
  journal={Journal of Combinatorial Theory, Series A},
  volume={89},
  number={1},
  pages={133--140},
  year={2000},
  doi={10.1006/jcta.1999.3002}
}

@incollection{Klazar2000,
  author={Klazar, Martin},
  title={The {F\"uredi--Hajnal} conjecture implies the {Stanley--Wilf} conjecture},
  booktitle={Formal Power Series and Algebraic Combinatorics},
  editor={Krob, Daniel and Mikhalev, Alexander A. and Mikhalev, Alexander V.},
  pages={250--255},
  publisher={Springer},
  address={Berlin, Heidelberg},
  year={2000},
  doi={10.1007/978-3-662-04166-6_22}
}

@article{Cibulka2009,
  author={Cibulka, Josef},
  title={On constants in the {F\"uredi--Hajnal} and the {Stanley--Wilf} conjecture},
  journal={Journal of Combinatorial Theory, Series A},
  volume={116},
  number={2},
  pages={290--302},
  year={2009},
  doi={10.1016/j.jcta.2008.06.003}
}

@book{JLR2000,
  title={Random Graphs},
  author={Janson, Svante and {\L}uczak, Tomasz and Ruci{\'n}ski, Andrzej},
  series={Wiley-Interscience Series in Discrete Mathematics and Optimization},
  publisher={John Wiley \& Sons},
  address={New York},
  year={2000},
  doi={10.1002/9781118032718}
}

@article{Rado1964,
  author={Rado, Richard},
  title={Universal graphs and universal functions},
  journal={Acta Arithmetica},
  volume={9},
  number={4},
  pages={331--340},
  year={1964},
  doi={10.4064/aa-9-4-331-340}
}

@article{Moon1965,
  author={Moon, J. W.},
  title={On minimal {$n$}-universal graphs},
  journal={Proceedings of the Glasgow Mathematical Association},
  volume={7},
  number={1},
  pages={32--33},
  year={1965},
  doi={10.1017/S2040618500035139}
}

@article{Alon2017,
  author={Alon, Noga},
  title={Asymptotically optimal induced universal graphs},
  journal={Geometric and Functional Analysis},
  volume={27},
  number={1},
  pages={1--32},
  year={2017},
  doi={10.1007/s00039-017-0396-9}
}

@inproceedings{AlonCapalbo2008,
  author={Alon, Noga and Capalbo, Michael},
  title={Optimal universal graphs with deterministic embedding},
  booktitle={Proceedings of the Nineteenth Annual ACM--SIAM Symposium on Discrete Algorithms},
  pages={373--378},
  publisher={ACM},
  year={2008}
}

@article{DellamonicaEtAl2015,
  author={Dellamonica, Jr., Domingos and Kohayakawa, Yoshiharu and R{\"o}dl, Vojt{\v e}ch and Ruci{\'n}ski, Andrzej},
  title={An improved upper bound on the density of universal random graphs},
  journal={Random Structures \& Algorithms},
  volume={46},
  number={2},
  pages={274--299},
  year={2015},
  doi={10.1002/rsa.20545}
}

@article{Montgomery2019,
  author={Montgomery, Richard},
  title={Spanning trees in random graphs},
  journal={Advances in Mathematics},
  volume={356},
  pages={106793},
  year={2019},
  doi={10.1016/j.aim.2019.106793}
}

@article{Arratia1999,
  author={Arratia, Richard},
  title={On the {Stanley--Wilf} conjecture for the number of permutations avoiding a given pattern},
  journal={Electronic Journal of Combinatorics},
  volume={6},
  number={1},
  pages={N1},
  year={1999},
  doi={10.37236/1477}
}

@article{Miller2009,
  author={Miller, Alison},
  title={Asymptotic bounds for permutations containing many different patterns},
  journal={Journal of Combinatorial Theory, Series A},
  volume={116},
  number={1},
  pages={92--108},
  year={2009},
  doi={10.1016/j.jcta.2008.04.007}
}

@article{EngenVatter2021,
  author={Engen, Michael and Vatter, Vincent},
  title={Containing all permutations},
  journal={American Mathematical Monthly},
  volume={128},
  number={1},
  pages={4--24},
  year={2021},
  doi={10.1080/00029890.2021.1835384}
}

@article{ChromanKwanSinghal2021,
  author={Chroman, Zachary and Kwan, Matthew and Singhal, Mihir},
  title={Lower bounds for superpatterns and universal sequences},
  journal={Journal of Combinatorial Theory, Series A},
  volume={182},
  pages={105467},
  year={2021},
  doi={10.1016/j.jcta.2021.105467}
}

@article{Alon2016,
  author={Alon, Noga},
  title={When are random permutations {$k$}-universal?},
  journal={Oberwolfach Reports},
  volume={13},
  number={2},
  pages={1218--1219},
  year={2016},
  note={In {\em Combinatorics and Probability}, Oberwolfach Report 22/2016},
  doi={10.4171/OWR/2016/22}
}

@book{Romik2015,
  author={Romik, Dan},
  title={The Surprising Mathematics of Longest Increasing Subsequences},
  publisher={Cambridge University Press},
  address={Cambridge},
  year={2015},
  doi={10.1017/CBO9781139872003}
}

@article{LoganShepp1977,
  author={Logan, B. F. and Shepp, L. A.},
  title={A variational problem for random {Young} tableaux},
  journal={Advances in Mathematics},
  volume={26},
  number={2},
  pages={206--222},
  year={1977},
  doi={10.1016/0001-8708(77)90030-5}
}

@article{VershikKerov1977,
  author={Vershik, A. M. and Kerov, S. V.},
  title={Asymptotic behavior of the {Plancherel} measure of the symmetric group and the limit form of {Young} tableaux},
  journal={Doklady Akademii Nauk SSSR},
  volume={233},
  number={6},
  pages={1024--1027},
  year={1977}
}

@article{AlonEtAlDensity,
  author={Alon, Noga and Dodson, Natalie and Jackson, Carmen and McCarty, Rose and Nenadov, Rajko and Southern, Lani},
  title={Universality for graphs with bounded density},
  journal={Journal of Combinatorial Theory, Series B},
  volume={178},
  pages={245--266},
  year={2026},
  doi={10.1016/j.jctb.2026.01.004},
  url={https://arxiv.org/abs/2311.05500v2}
}

@article{CohenAntonirEtAl2026,
  author={Cohen Antonir, Asaf and Lichev, Lyuben and Zhukovskii, Maksim},
  title={Universality in random graphs via optimal linking systems: trees and beyond},
  journal={arXiv preprint arXiv:2608.06358},
  year={2026},
  doi={10.48550/arXiv.2608.06358},
  url={https://arxiv.org/abs/2608.06358}
}

@article{AlonNenadov2019,
  author={Alon, Noga and Nenadov, Rajko},
  title={Optimal induced universal graphs for bounded-degree graphs},
  journal={Mathematical Proceedings of the Cambridge Philosophical Society},
  volume={166},
  number={1},
  pages={61--74},
  year={2019},
  doi={10.1017/S0305004117000706}
}

@article{EsperetJoretMorin2023,
  author={Esperet, Louis and Joret, Gwena{\"e}l and Morin, Pat},
  title={Sparse universal graphs for planarity},
  journal={Journal of the London Mathematical Society},
  volume={108},
  number={4},
  pages={1333--1357},
  year={2023},
  doi={10.1112/jlms.12781}
}

@article{AllenBottcherLiebenau2026,
  author={Allen, Peter and B{\"o}ttcher, Julia and Liebenau, Anita},
  title={Universality for graphs of bounded degeneracy},
  journal={Random Structures \& Algorithms},
  volume={68},
  number={2},
  pages={e70055},
  year={2026},
  doi={10.1002/rsa.70055}
}

@article{FerberNenadov2018,
  author={Ferber, Asaf and Nenadov, Rajko},
  title={Spanning universality in random graphs},
  journal={Random Structures \& Algorithms},
  volume={53},
  number={4},
  pages={604--637},
  year={2018},
  doi={10.1002/rsa.20816}
}

@article{FerberKronenbergLuh2019,
  author={Ferber, Asaf and Kronenberg, Gal and Luh, Kyle},
  title={Optimal threshold for a random graph to be {$2$}-universal},
  journal={Transactions of the American Mathematical Society},
  volume={372},
  number={6},
  pages={4239--4262},
  year={2019},
  doi={10.1090/tran/7727}
}

@article{BohmanFriezeMartin2003,
  author={Bohman, Tom and Frieze, Alan and Martin, Ryan},
  title={How many random edges make a dense graph {Hamiltonian}?},
  journal={Random Structures \& Algorithms},
  volume={22},
  number={1},
  pages={33--42},
  year={2003},
  doi={10.1002/rsa.10070}
}

@article{KrivelevichKwanSudakov2017,
  author={Krivelevich, Michael and Kwan, Matthew and Sudakov, Benny},
  title={Bounded-degree spanning trees in randomly perturbed graphs},
  journal={SIAM Journal on Discrete Mathematics},
  volume={31},
  number={1},
  pages={155--171},
  year={2017},
  doi={10.1137/15M1032910}
}

@article{BottcherEtAl2019Universality,
  author={B{\"o}ttcher, Julia and Han, Jie and Kohayakawa, Yoshiharu and Montgomery, Richard and Parczyk, Olaf and Person, Yury},
  title={Universality for bounded degree spanning trees in randomly perturbed graphs},
  journal={Random Structures \& Algorithms},
  volume={55},
  number={4},
  pages={854--864},
  year={2019},
  doi={10.1002/rsa.20850}
}

@article{BottcherEtAl2020Embedding,
  author={B{\"o}ttcher, Julia and Montgomery, Richard and Parczyk, Olaf and Person, Yury},
  title={Embedding spanning bounded degree graphs in randomly perturbed graphs},
  journal={Mathematika},
  volume={66},
  number={2},
  pages={422--447},
  year={2020},
  doi={10.1112/mtk.12005}
}

@article{Parczyk2020,
  author={Parczyk, Olaf},
  title={{2}-universality in randomly perturbed graphs},
  journal={European Journal of Combinatorics},
  volume={87},
  pages={103118},
  year={2020},
  doi={10.1016/j.ejc.2020.103118}
}

@article{BottcherEtAl2024Square,
  author={B{\"o}ttcher, Julia and Parczyk, Olaf and Sgueglia, Amedeo and Skokan, Jozef},
  title={The square of a {Hamilton} cycle in randomly perturbed graphs},
  journal={Random Structures \& Algorithms},
  volume={65},
  number={2},
  pages={342--386},
  year={2024},
  doi={10.1002/rsa.21215}
}

@article{JoagDevProschan1983,
  author={Joag-Dev, Kumar and Proschan, Frank},
  title={Negative Association of Random Variables with Applications},
  journal={The Annals of Statistics},
  volume={11},
  number={1},
  pages={286--295},
  year={1983},
  doi={10.1214/aos/1176346079}
}
\end{document}